\title{Operator algebras associated to integral domains}
\author{Benton L. Duncan}
\address{Department of Mathematics\\
North Dakota State University\\
Fargo, North Dakota\\
USA}
\email{benton.duncan@ndsu.edu}
\subjclass[2000]{47L74, 47L40}
\keywords{integral domains, semicrossed products}
\begin{document}

\theoremstyle{plain}
\newtheorem{thm}{Theorem}
\newtheorem{lem}{Lemma}
\newtheorem{prop}{Proposition}
\newtheorem{cor}{Corollary}

\theoremstyle{definition}
\newtheorem{dfn}{Definition}
\newtheorem*{construction}{Construction}
\newtheorem*{example}{Example}

\theoremstyle{remark}
\newtheorem*{conjecture}{Conjecture}
\newtheorem*{acknowledgement}{Acknowledgements}
\newtheorem{rmk}{Remark}

\begin{abstract} We study operator algebras associated to
integral domains.  In particular, with respect to a set of natural
identities we look at the possible nonselfadjoint operator algebras
which encode the ring structure of an integral domain.  We show that
these algebras give a new class of examples of semicrossed products
by discrete semigroups.  We investigate the structure of these
algebras together with a particular class of representations.
\end{abstract}

\maketitle

Recently, in \cite{Cuntz-Li:2008} and \cite{Li:2009} the notion of a
regular $C^*$-algebra associated to an integral domain was
introduced as a generalization of a construction due to Cuntz,
\cite{Cuntz}.  In these papers the authors associate a $C^*$-algebra
by representing $\ell^2(R)$ and viewing the operators given by the
regular representation $R$ acting on $\ell^2(R)$.  In addition they
show that the $C^*$-algebras so constructed are universal with
respect to a collection of identities that encode information about
the integral domain.

Now one can view an integral domain as an additive group together
with an action on the additive group given by multiplication by
nonzero elements of the ring. This suggests that an important
viewpoint for studying operator algebras associated to integral
domains is through the use of crossed products.  More importantly,
since crossed products are well understood many of the significant
results can be made brief through the technology of crossed
products.

For this paper we wish to investigate the operator algebras with
slightly less restrictive identities imposed by only natural
ring-theoretic constraints.  This gives rise to operator algebras
with a more natural crossed product structure.  However since the
multiplication in a ring need not act as automorphisms on the
additive group, crossed products are not entirely appropriate.  To
avoid this we use the nonselfadjoint operator algebras where
possible. This goes back to a construction of Arveson and Josephson
\cite{Arveson-Josephson} which was generalized by Peters in
\cite{Peters:1984}.  This semicrossed product is a nonselfadjoint
operator algebra which encodes the same dynamics as the crossed
product but does not require that the action on a topological space
be via homeomorphisms.

While one may worry that we lose too much information when we lose
the $*$-structure of the $C^*$-algebra, in recent work
\cite{Davidson-Katsoulis:2006} it was shown that the semicrossed
products of Peters in fact encode the action of a continuous self
map on a topological space in a manner which is unique up to
conjugacy of the map.  This is even true when the map is not a
homeomorphism and hence unlike with $C^*$-algebras the
nonselfadjoint operator algebras can be used as a topological
invariant.

It is these motivating examples which have led us to study the
semicrossed product algebras in the context of integral domains.  In
this paper we have defined the universal operator algebra associated
to an integral domain (note the different conditions we require from
those of Cuntz and Li).  We then study the situation in the case
that our integral domain is a field.  Here the semicrossed product
and the crossed product coincide and we can use standard results for
crossed products to prove facts about the algebra.  After viewing
the case of the integral domain being a field we focus on the
situation where this may not be true.  Here the semicrossed product
technology is necessary, however similar results carry through.
After defining the requisite notion of semicrossed product and
proving some first results in the context of integral domains we
prove some results which show that the algebras thus defined are
distinct from the algebras studied by Peters.  In the last section
we analyze what we call unitary representations of an integral
domain $R$.  We show that every such representation factors through
a regular unitary representation.

We describe some standard notation we intend to use.  If $R$ is an
integral domain we write $Q(R)$ for the field of quotients. We write
$(R,+)$ for the additive group on $R$.  This group is a locally
compact discrete group.  We denote the Pontryagin dual of this group
by $(\widehat{R,+})$ and denote by $\widehat{a}$ the element in the
Pontryagin dual corresponding to an element $a$ in $(R,+)$.

The author would like to thank Jim Coykendall and Sean
Sather-Wagstaff for helpful discussions about and examples of
integral domains.

\section{Universal algebras of integral domains}

Let $R$ be an integral domain.  Given a Hilbert space $\mathcal{H}$
we define an {\em isometric representation of $R$} to be $ \{ S_r
\in B(\mathcal{H}): r \in R^{\times} \}$ a collection of isometries
together with unitaries $ \{ U^n \in B(\mathcal{H}): n \in R \}$
that satisfy the relations:
\begin{enumerate} \item $ S_rS_t = S_{rt} $ for all $ r,
t \in R^{\times} $,

\item $U^n U^m = U^{m+n} $ for all $ m, n \in R$, and

\item $U^nS_r = S_r U^{rn}$ for all $ r \in R^{\times}, n \in R$.
\end{enumerate}

If the $S_r$ are unitaries for all $ r \in R^{\times}$ then we say
that the representation is a {\em unitary representation}.

We present first two examples:

\begin{example} (The regular representation of an integral domain)

Let $\mathcal{H}$ be equal to $\ell^2(R)$, with $e_q$ denoting the
characteristic function of $\{ q \} \subseteq R$. Define operators
$U^n$ and $S_r$ as follows:
\begin{align*} U^n \left( \sum_{q \in R} \zeta_q e_q\right) &=
\sum_{q \in R} \zeta_q e_{q+n}
\\ S_r \left( \sum_{q \in R} \zeta_q e_q\right) &= \sum_{q \in R}
\zeta_q e_{rq}.
\end{align*} It is not difficult to see that $ \{U^n\}$ and $\{
S_r \}$ give rise to an isometric representation of $R$.
\end{example}

\begin{example} (The regular unitary representation of an integral domain)

Let $\mathcal{K}$ be equal to $ \ell^2(Q(R))$, with $e_q$ denoting
the characteristic function of $ \{ q \} \subseteq Q(R)$. We use the
same formulas to define $\widetilde{U^n}$ and $\widetilde{S_r}$.
Notice this time however that for all $r$, $ \widetilde{S_r}$ is
onto and hence a unitary. (To see this notice that
$\widetilde{S_r}(e_{\frac{q}{r}}) = e_q$ for every $q \in Q(R)$).

An important point to notice is that $\mathcal{H} \subseteq
\mathcal{K}$ and further $\mathcal{H}$ is an invariant subspace for
the collections $ \{ \widetilde{S_r} \}$ and $ \{ \widetilde{U^n}
\}$. Further $ S_r = P_{\mathcal{H}} \widetilde{S_r}
|_{\mathcal{H}}$ for all $r$ and $ U^n = P_{\mathcal{H}}
\widetilde{U^n} |_{\mathcal{H}}$.  For this reason we call this
representation the {\em regular unitary representation of $R$}.
\end{example}

We let $A(R)$ be the norm closed operator algebra generated by
unitaries $\{ u^n: n \in R \}$ and isometries $ \{ s_r: r \in
R^{\times} \}$ which is universal for isometric representations of
$R$.  We will denote the elements of $A(R)$ with lower case letters
to distinguish from a representation of $R$ for which we will use
upper case letters.

We notice some initial facts about the algebra $A(R)$.

\begin{lem} $A(R)$ is unital with $u^0 = s_1$. \end{lem}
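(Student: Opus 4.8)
The plan is to argue inside an arbitrary isometric representation $\{S_r, U^n\}$ and then transfer the conclusion to $A(R)$ using its universal property, since the norm on $A(R)$ is the supremum of the operator norms over all isometric representations. First I would record two idempotency relations coming directly from the defining identities. Taking $r = t = 1$ in relation (1) gives $S_1 S_1 = S_1$, so $S_1$ is an idempotent; it is also an isometry, so $S_1^* S_1 = I$, whence $S_1 = (S_1^* S_1)S_1 = S_1^*(S_1 S_1) = S_1^* S_1 = I$. Similarly, taking $m = n = 0$ in relation (2) gives $U^0 U^0 = U^0$, and since $U^0$ is a unitary I may multiply on the left by $(U^0)^*$ to obtain $U^0 = I$. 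Thus in every isometric representation both $S_1$ and $U^0$ are the identity operator.

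Next I would push this down to $A(R)$. For any representation $\pi$ we have $\pi(s_1) = I = \pi(u^0)$, so $\|s_1 - u^0\| = \sup_\pi \|\pi(s_1) - \pi(u^0)\| = 0$, and hence $s_1 = u^0$ in $A(R)$. The same estimate shows this element is a two-sided unit: for any $a \in A(R)$ we have $\pi(s_1 a - a) = \pi(s_1)\pi(a) - \pi(a) = 0$ for every $\pi$, so $s_1 a = a$, and symmetrically $a s_1 = a$. Since $s_1$ is itself a generator of $A(R)$, this exhibits $A(R)$ as unital with identity $u^0 = s_1$.

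The only point requiring care is the reduction to representations: one must know that the universal norm is genuinely computed as a supremum over isometric representations, so that an element killed by every $\pi$ is zero and an element sent to $I$ by every $\pi$ is the unit. This is exactly the content of the universal property defining $A(R)$. The algebraic heart—that an idempotent isometry, or an idempotent unitary, equals the identity—is then immediate, so I expect no serious obstacle beyond checking that the relations are applied with admissible parameters, namely $1 \in R^{\times}$ for $s_1$ and $0 \in R$ for $u^0$.
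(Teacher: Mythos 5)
Your proof is correct and follows essentially the same route as the paper: showing that the idempotent unitary $U^0$ and the idempotent isometry $S_1$ each equal the identity in any isometric representation (via $S_1 = S_1^*S_1S_1 = S_1^*S_1 = I$ and the analogous computation for $U^0$), then transferring to $A(R)$ through the universal property. Your explicit remark that the universal norm is a supremum over isometric representations merely spells out what the paper leaves implicit in ``since this is true for an arbitrary isometric representation.''
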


\begin{proof} Let $\{ U^r, S_r \}$ be an isometric representation of
$R$.  Then notice that $U_0$ is an idempotent unitary, hence $1 =
U_0^*U_0 $. Then $U_0 = 1\cdot U_0 = U_0^*U_0^2 = U_0^*U_0 = 1$. A
similar argument yields the same result for $S_1$.  Since this is
true for an arbitrary isometric representation of $R$, the result
follows for $A(R)$.\end{proof}

\begin{lem} If $r$ is invertible then $s_r$ is a unitary with $s_r^*
= s_{r^{-1}}$  \end{lem}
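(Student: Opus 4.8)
The plan is to exploit the fact that invertibility of $r$ in $R$ produces, via relation (1), a genuine two-sided inverse for $s_r$ inside the algebra, and then to match this inverse against the adjoint supplied by the isometry condition. First I would record the algebraic consequences of invertibility: since $r$ is invertible there is $r^{-1} \in R^\times$ with $r r^{-1} = r^{-1} r = 1$, and applying relation (1) in both orders, together with the previous lemma identifying $s_1$ with the unit, gives
\[
s_r s_{r^{-1}} = s_{r r^{-1}} = s_1 = 1 \quad\text{and}\quad s_{r^{-1}} s_r = s_{r^{-1} r} = s_1 = 1.
\]
Thus $s_{r^{-1}}$ is a two-sided multiplicative inverse of $s_r$ in $A(R)$.

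Next I would bring in the isometry relation. Because $s_r$ is an isometry we have $s_r^* s_r = 1$, so $s_r^*$ is a left inverse of $s_r$, while $s_{r^{-1}}$ is a right inverse. In any unital algebra a left inverse and a right inverse of the same element must coincide --- one sees this from $s_r^* = s_r^*(s_r s_{r^{-1}}) = (s_r^* s_r) s_{r^{-1}} = s_{r^{-1}}$ --- so $s_r^* = s_{r^{-1}}$. It then follows immediately that $s_r s_r^* = s_r s_{r^{-1}} = 1$, and since also $s_r^* s_r = 1$ we conclude that $s_r$ is unitary. Running this argument inside an arbitrary isometric representation $\{U^n, S_r\}$ establishes $S_r^* = S_{r^{-1}}$ there, and passing to the universal algebra yields the stated identity for $s_r$.

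I do not anticipate a serious obstacle here: the content is essentially the observation that an isometry possessing a right inverse is automatically unitary. The one point requiring a little care is conceptual rather than computational, namely that $s_r^*$ is a priori only an operator on the underlying Hilbert space and need not obviously lie in the nonselfadjoint algebra $A(R)$. The uniqueness-of-inverses identification sidesteps this by exhibiting $s_r^*$ directly as the algebra element $s_{r^{-1}}$, so that the adjoint is recovered internally from the defining relations without any appeal to selfadjointness of $A(R)$.
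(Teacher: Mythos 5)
Your proof is correct and takes essentially the same route as the paper, which simply notes that $S_r S_{r^{-1}} = S_1 = S_{r^{-1}} S_r$ (an identity $=1$ by the previous lemma) in any isometric representation; your argument is a carefully filled-in version of this one-liner, with the left-inverse/right-inverse identification $s_r^* = (s_r^* s_r)s_{r^{-1}} = s_{r^{-1}}$ made explicit. Your closing remark that this realizes the adjoint internally as the algebra element $s_{r^{-1}}$, without presuming selfadjointness of $A(R)$, is a correct and worthwhile observation that the paper leaves implicit.
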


\begin{proof} This follows from the previous lemma since $S_r
S_{r^{-1}} = S_1 = S_{r^{-1}}S_r$, for any isometric representation
of $R$. \end{proof}

It follows that if $R$ is a field then $A(R)$ is a $C^*$-algebra. In
addition, in this case, the regular representation is a unitary
representation. In fact we have the following:

\begin{prop} $R$ is a field if and only if every isometric representation is
a unitary representation. \end{prop}

\begin{proof} This comes from the fact that for the regular
representation $(S_r)^*$ is in the algebra if and only if $r$ is
invertible.\end{proof}

Finally we can see that $A(R)$ is functorial for ring monomorphisms
since any isometric representation of a ring $R_2$ will give rise to
an isometric representation of a ring $R_1$ if there is a ring
monomorphism from $R_1 $ into $R_2$..

\begin{prop} $A(R)$ is functorial in the sense that if $\pi: R_1
\rightarrow R_2$ is a unital ring monomorphism then there is an
induced completely contractive representation $\tilde{\pi}: A(R_1)
\rightarrow A(R_2)$ \end{prop}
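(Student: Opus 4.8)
The plan is to exhibit, inside $A(R_2)$, an isometric representation of $R_1$ and then invoke the universal property of $A(R_1)$. First I would prescribe the map on generators by $\tilde{\pi}(u^n) = u^{\pi(n)}$ for $n \in R_1$ and $\tilde{\pi}(s_r) = s_{\pi(r)}$ for $r \in R_1^{\times}$. The first thing to check is that these targets even make sense: since $\pi$ is a monomorphism and $R_2$ is an integral domain, $r \in R_1^{\times}$ forces $\pi(r) \neq 0$, so $\pi(r) \in R_2^{\times}$ and $s_{\pi(r)}$ is a legitimate generator of $A(R_2)$; since $\pi$ is unital, $\pi(1) = 1$, so the identity is respected as well.

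Next I would verify that the families $\{u^{\pi(n)} : n \in R_1\}$ and $\{s_{\pi(r)} : r \in R_1^{\times}\}$ satisfy relations (1)--(3) for $R_1$. This is exactly where the ring-homomorphism hypotheses enter: relation (1) follows from $s_{\pi(r)} s_{\pi(t)} = s_{\pi(r)\pi(t)} = s_{\pi(rt)}$ via multiplicativity; relation (2) from $u^{\pi(n)} u^{\pi(m)} = u^{\pi(m)+\pi(n)} = u^{\pi(m+n)}$ via additivity; and relation (3) from $u^{\pi(n)} s_{\pi(r)} = s_{\pi(r)} u^{\pi(r)\pi(n)} = s_{\pi(r)} u^{\pi(rn)}$ via multiplicativity again. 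Hence the images constitute a genuine isometric representation of $R_1$.

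To obtain a map landing in $A(R_2)$ rather than merely in some $B(\mathcal{K})$, I would fix a completely isometric representation of $A(R_2)$ on a Hilbert space $\mathcal{K}$. The operators $u^{\pi(n)}, s_{\pi(r)}$ then form an isometric representation of $R_1$ on $\mathcal{K}$, so by the universal property of $A(R_1)$ there is a completely contractive homomorphism $\Phi : A(R_1) \to B(\mathcal{K})$ with $\Phi(u^n) = u^{\pi(n)}$ and $\Phi(s_r) = s_{\pi(r)}$. Since $\Phi$ sends the generators of $A(R_1)$ into the norm-closed subalgebra $A(R_2) \subseteq B(\mathcal{K})$, its entire range lies in $A(R_2)$; corestricting yields the desired completely contractive $\tilde{\pi} : A(R_1) \to A(R_2)$, with complete contractivity inherited directly from the universal property.

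I do not anticipate a genuine obstacle here, since the definition of $A(R_1)$ as the universal algebra for isometric representations is precisely engineered to convert relation-preserving assignments of the generators into completely contractive algebra maps. The only points requiring a moment's care are the two noted above: confirming that $\pi$ carries nonzero elements to invertible-indexed generators (so that each $s_{\pi(r)}$ exists), and confirming that the induced map takes values in the closed algebra $A(R_2)$ rather than in the ambient $B(\mathcal{K})$.
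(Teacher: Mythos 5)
Your proposal is correct and is exactly the argument the paper intends: the paper offers no written proof beyond the one-sentence remark preceding the proposition (that an isometric representation of $R_2$ pulls back along $\pi$ to an isometric representation of $R_1$), and you have simply carried out that verification in full --- well-definedness of $s_{\pi(r)}$ via injectivity of $\pi$, the three relations, and the appeal to the universal property of $A(R_1)$. One cosmetic refinement: rather than an arbitrary completely isometric representation of $A(R_2)$ on $\mathcal{K}$, take the universal representation itself, since being an isometry or unitary is a $*$-condition that an arbitrary completely isometric (non-$*$) representation of the nonselfadjoint algebra need not preserve.
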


\section{The universal $C^*$-algebra for a field}

We now analyze the case where $R$ is a field.  Here any isometric
representation is, in fact, a unitary representation.  We let
$(R,+)$ denote the additive group in $R$. Notice that $R^{\times}$
acts on $C^*(R,+)$ as $*$-automorphisms via the mapping
$\alpha_{\lambda}(U^n) = U^{\lambda n}$ where $U^n$ is the unitary
in $C^*(R,+)$ corresponding to $n \in (R,+)$.  This allows us to
rewrite $C^*(R)$ as a crossed product.

\begin{prop} Let $R$ be a field, then $A(R) \cong C^*(R,+)
\rtimes R^{\times}$. \end{prop}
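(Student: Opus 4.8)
The plan is to identify both $A(R)$ and the full crossed product $C^*(R,+) \rtimes R^\times$ as the universal $C^*$-algebra for one and the same class of representations, and then read off the isomorphism from the matching universal properties. Since $R$ is a field, the preceding Proposition tells us that every isometric representation of $R$ is a unitary representation, so in particular each $S_r$ is a unitary and $A(R)$ is a genuine $C^*$-algebra. The action in play is $\alpha \colon R^\times \to \operatorname{Aut}(C^*(R,+))$, $\alpha_\lambda(u^n) = u^{\lambda n}$, which is a $*$-automorphism because multiplication by a nonzero $\lambda$ is an automorphism of $(R,+)$ when $R$ is a field.

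First I would set up a bijection between unitary representations of $R$ and covariant representations of the $C^*$-dynamical system $(C^*(R,+), R^\times, \alpha)$. Given a unitary representation $\{U^n, S_r\}$ of $R$, relation (2) says that $n \mapsto U^n$ is a unitary representation of the additive group $(R,+)$, which integrates to a $*$-representation $\pi$ of $C^*(R,+)$ with $\pi(u^n) = U^n$; relation (1) together with the second Lemma says that $r \mapsto S_r$ is a unitary representation $V$ of the multiplicative group $R^\times$; and relation (3), rewritten as $S_r^* U^n S_r = U^{rn}$, is precisely the covariance identity $V_r^* \pi(u^n) V_r = \pi(\alpha_r(u^n))$. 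Thus $(\pi, V)$ is a covariant representation. Conversely, a covariant representation $(\pi, V)$ returns a unitary representation of $R$ by setting $U^n = \pi(u^n)$ and $S_r = V_r$, and these two assignments are mutually inverse. I would double-check the convention here: one must be careful whether $S_r$ implements $\alpha_r$ or $\alpha_{r^{-1}}$, since relation (3) fixes the direction, and choosing $V_r = S_r$ versus $V_r = S_{r^{-1}} = S_r^*$ changes nothing beyond relabelling because $R^\times$ is abelian.

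With the bijection in hand, the conclusion is formal. The algebra $A(R)$ is by definition universal for unitary representations of $R$, while the full crossed product $C^*(R,+) \rtimes R^\times$ is universal for covariant representations of the system. The universal arrow of the crossed product gives a $*$-homomorphism $\Phi \colon C^*(R,+) \rtimes R^\times \to A(R)$ sending the canonical generators to $u^n$ and $s_r$, and the universal property of $A(R)$ applied to the canonical covariant representation inside the crossed product gives a $*$-homomorphism $\Psi \colon A(R) \to C^*(R,+) \rtimes R^\times$ in the reverse direction. Since each sends generators to generators and the two classes of representations correspond bijectively, $\Phi \circ \Psi$ and $\Psi \circ \Phi$ are the identity on generators, hence on the whole algebra, yielding $A(R) \cong C^*(R,+) \rtimes R^\times$.

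The step I expect to be the main obstacle is the bookkeeping in the universal-property comparison rather than any single hard estimate. I must make sure that the crossed product appearing is the full crossed product, so that its universal property over all covariant representations matches the universal property defining $A(R)$, and that the correspondence between unitary representations and covariant representations is genuinely a bijection compatible with the $*$-operation and with direct sums, so that the induced homomorphisms are honest inverse $*$-isomorphisms rather than merely completely contractive maps. Getting the covariance convention and the direction of the action exactly right is the place where an error would most easily slip in.
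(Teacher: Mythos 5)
Your proposal is correct and takes essentially the same approach as the paper: both arguments rest on the bijection between unitary representations of $R$ and covariant representations of $(C^*(R,+), R^{\times}, \alpha)$, with the paper packaging it as a single map $\iota \colon A(R) \rightarrow C^*(R,+) \rtimes R^{\times}$ induced by the canonical covariant representation and shown faithful via the norm inequality $\|x\| \leq \|\iota(x)\|$, while you make the two universal arrows and their mutual inverseness explicit. Your flagging of the covariance convention (whether $S_r$ implements $\alpha_r$ or $\alpha_{r^{-1}}$, harmless since inversion is an automorphism of the abelian group $R^{\times}$) is a detail the paper's proof glosses over.
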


\begin{proof} We begin by noting (see \cite[II.10.3.10]{Blackadar:2006})
that since $C^*(R,+)$ is unital and $R^{\times}$ is discrete we have
$C^*(R,+) \subset C^*(R,+) \rtimes R^{\times}$ via a representation
$\pi_0$ and there is a natural map $\rho_0: R^{\times} \rightarrow
C^*(R,\times) \rtimes R^{\times}$. Together $(\pi_0,\rho_0)$ give
rise to a covariant representation of the triple $(C^*(R,+),
R^{\times}, \alpha)$.

Now analyzing the covariance conditions that define $C^*(R,+)
\rtimes R^{\times}$ we see that $\rho_0(r) \pi_0(n) \rho_0(r)^* =
\pi_0(rn)$.  Now for each $n$, $\pi_0(n)$ is a unitary and for each
$r$, $\rho_0(r)$ is a unitary and hence the natural covariant
representation $\pi_0 \rtimes \rho_0$ gives rise to a unitary
representation of $R$, and hence there is a completely contractive
representation $\iota :A(R) \rightarrow C^*(R,+) \rtimes
R^{\times}$.

Next notice that any unitary representation of $A(R)$ gives rise to
a covariant representation of $(C^*(R,+),R^{\times}, \alpha)$ and
hence $ \| x \| \leq \| \iota(x) \|$ so that $\iota$ is faithful.
\end{proof}

Since $C^*(R,+)$ and $R^{\times}$ are both abelian we have that the
universal norm on the crossed product $C^*(R,+) \rtimes R^{\times}$
coincides with the reduced norm \cite[Theorem 7.13]{Williams:2007}.
In addition, we have that the algebra $A(R)$ is nuclear
\cite[Corollary 7.18]{Williams:2007}, when $R$ is a field.  We next
analyze the regular representation of $A(R)$ where $R$ is a field.

\begin{prop} Let $R$ be a field, then the regular
representation of $A(R)$ is faithful. \end{prop}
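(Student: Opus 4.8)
The plan is to leverage the identification $A(R)\cong C^*(R,+)\rtimes R^\times$ from the preceding proposition, together with the observation already recorded that, since both $(R,+)$ and $R^\times$ are abelian and hence amenable, the universal and reduced norms on this crossed product agree. Because $Q(R)=R$ for a field, the regular representation coincides with the regular unitary representation, so the $\{U^n\}$ and $\{S_r\}$ are genuinely unitary and the universal property yields a $*$-representation $\Lambda$ of $A(R)$ on $\ell^2(R)$. It therefore suffices to prove that $\Lambda$ implements the reduced norm, i.e.\ that $\|\Lambda(x)\|=\|x\|$ for every $x$ in the dense $*$-subalgebra spanned by the monomials $s_r u^n$.

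First I would pass to the Pontryagin dual. Under the Fourier transform $\ell^2(R)\cong L^2((\widehat{R,+}),\mu)$, with $\mu$ normalized Haar measure, the subalgebra generated by $\{U^n\}$ becomes $C^*(R,+)\cong C((\widehat{R,+}))$ acting by multiplication, and this action is faithful; the isometries $S_r$ become the Koopman unitaries of the Haar-measure-preserving dual action $r\cdot\widehat{a}=\widehat{ra}$ of $R^\times$ on $(\widehat{R,+})$. Thus $\Lambda$ is the integrated form of a covariant pair whose coefficient part is faithful. A key structural point, which I would record next, is that this action is free off a single fixed point: if $r\neq 1$ and $r\cdot\chi=\chi$, then $\chi$ is trivial on $(r-1)R=R$, forcing $\chi=\widehat{0}$.

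With the coefficient representation faithful and the group amenable, faithfulness of $\Lambda$ on $C^*(R,+)\rtimes R^\times=C^*(R,+)\rtimes_r R^\times$ reduces to showing that this covariant representation weakly contains the regular representation of the reduced crossed product. Equivalently, writing $G=R\rtimes R^\times$ so that $A(R)\cong C^*(G)$ and $\Lambda$ is the quasi-regular representation $\mathrm{Ind}_{R^\times}^{G}\mathbf{1}$ on $\ell^2(R)\cong\ell^2(G/R^\times)$, I must establish $\lambda_G\prec\Lambda$. Here I would use amenability of $R^\times$ — so that $\mathbf{1}_{R^\times}$ is weakly equivalent to $\lambda_{R^\times}$ and induction is continuous for weak containment — in combination with the freeness of the action on the complement of $\widehat{0}$ to match $\Lambda$ with $\lambda_G$ up to the rank-one contribution of the fixed vector. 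I expect the main obstacle to be precisely this last point: controlling the fixed point $\widehat{0}$ (equivalently the vector $e_0\in\ell^2(R)$, which is common to all $S_r$) and verifying that it does not obstruct the weak containment $\lambda_G\prec\Lambda$. Once that is settled one has $\|\Lambda(x)\|\geq\|\lambda_G(x)\|=\|x\|_{C^*_r(G)}=\|x\|$, so that $\Lambda$ is isometric and hence faithful.
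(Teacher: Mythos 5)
Your reduction is the right one and your instinct about where the difficulty sits is exactly correct --- but the write-up stops at the decisive step. You never establish $\lambda_G \prec \Lambda$; you only announce it as ``the main obstacle'' you expect to overcome. Note that this step cannot be closed by the formal weak-containment machinery you invoke: amenability of $R^\times$ and continuity of induction give $\Lambda = \mathrm{Ind}_{R^\times}^G \mathbf{1} \prec \mathrm{Ind}_{R^\times}^G \lambda_{R^\times} = \lambda_G$, which is the wrong direction, and reversing it would require $\lambda_{R^\times} \prec \mathbf{1}_{R^\times}$, which is false. In fact the containment genuinely fails without an extra hypothesis: for a finite field such as $R = \mathbb{F}_3$ one has $G = (R,+) \rtimes R^\times \cong S_3$, and $\Lambda$ is the permutation representation on three points, which annihilates the sign character of $S_3$; so there $\Lambda$ is not faithful on $C^*(G) \cong A(R)$. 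Any correct completion must therefore use that $(\widehat{R,+})$ has no isolated points (equivalently, that $R$ is infinite) in combination with the freeness off $\widehat{0}$ that you verified --- precisely the fixed-point issue you flagged but did not resolve.

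Here is how to finish along your own lines. Your computation that $r \cdot \chi = \chi$ with $r \neq 1$ forces $\chi = \widehat{0}$ shows the dual action is \emph{topologically free}: for each $r \neq 1$ the fixed-point set is the singleton $\{\widehat{0}\}$, which has empty interior because an infinite compact group has no isolated points. By the Archbold--Spielberg intersection property for topologically free actions of discrete groups, every nonzero ideal of $C(\widehat{R,+}) \rtimes_r R^\times$ meets $C(\widehat{R,+})$ nontrivially; since $\Lambda$ restricted to $C(\widehat{R,+})$ is multiplication on $L^2$ of full-support Haar measure, hence faithful, it follows that $\ker \Lambda = 0$ on the reduced crossed product, which coincides with the full one by amenability. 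For comparison, the paper's own proof is terser still: it simply asserts that the regular representation of $R$ ``is exactly'' the regular representation of the crossed product. Strictly speaking $\Lambda$ acts on $\ell^2(G/R^\times)$ rather than on the space carrying the regular representation of the crossed product, and the two are only weakly equivalent --- and only when $R$ is infinite, as the $\mathbb{F}_3$ example shows --- so your more careful framing identifies a real subtlety glossed over in the paper; but as it stands your argument proves less than the statement, because the pivotal containment is left as an expectation rather than a proof.
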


\begin{proof}  This follows from analyzing the regular
representation of the algebra $C^*(R,+) \rtimes_{\alpha}
R^{\times}$, which is faithful since $R^{\times}$ is amenable. In
effect, we take the left regular representation of $C^*(R,+)$ acting
on $L^2(R,+)$ and add to this the action of $R^{\times}$ via
$*$-automorphisms. This is exactly the construction of the regular
representation of $R$ and hence the two coincide.\end{proof}

Other facts about $A(R)$ can also be explained using the crossed
product machinery.

\begin{prop} For a field $R$ the algebra $A(R)$ is not simple. \end{prop}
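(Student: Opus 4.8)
The plan is to produce a proper nonzero closed two-sided ideal of $A(R)$ by exhibiting a one-dimensional representation and taking its kernel. Conceptually this ideal arises from the crossed product picture of the previous propositions: under $A(R) \cong C^*(R,+) \rtimes R^{\times}$ with $\alpha_{\lambda}(U^n) = U^{\lambda n}$, the dual action of $R^{\times}$ on the spectrum $\widehat{R,+}$ fixes the trivial character $\chi_0$ (the constant function $n \mapsto 1$), since $\chi_0(\lambda n) = 1 = \chi_0(n)$ for all $\lambda \in R^{\times}$. A fixed point of the dual action yields an $R^{\times}$-invariant maximal ideal of $C^*(R,+)$ and hence a one-dimensional invariant quotient of the crossed product. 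Rather than develop this abstractly, I would realize the quotient map directly as a character on $A(R)$.

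Concretely, I would define $\varphi \colon A(R) \to \mathbb{C}$ on generators by $\varphi(u^n) = 1$ for all $n \in R$ and $\varphi(s_r) = 1$ for all $r \in R^{\times}$. To see this is well defined I check that the scalar $1$, regarded as the single unitary/isometry on a one-dimensional Hilbert space, satisfies the three defining relations of an isometric representation: the conditions $s_r s_t = s_{rt}$, $u^n u^m = u^{m+n}$, and $u^n s_r = s_r u^{rn}$ all reduce to $1 = 1$. Thus $\{1\}$ is a (unitary) isometric representation of $R$, and by the universal property of $A(R)$ it induces a completely contractive homomorphism $\varphi$ with $\varphi(u^n) = \varphi(s_r) = 1$. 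Its kernel $I = \ker \varphi$ is a closed two-sided ideal.

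It remains to verify that $I$ is a proper nonzero ideal. Properness is immediate: $\varphi$ is onto $\mathbb{C}$, so $I \neq A(R)$. The only place any real content enters is the nontriviality check, namely that $\varphi$ is not injective. For this I would invoke the faithfulness of the regular representation proved above: in the regular representation $U^1$ is translation by $1$ on $\ell^2(R,+)$, which is not the identity since $1 \neq 0$, so $u^1 \neq u^0 = 1$ in $A(R)$. Since $\varphi(u^1 - u^0) = 1 - 1 = 0$ while $u^1 - u^0 \neq 0$, the kernel $I$ is nonzero. Hence $A(R)$ has a proper nonzero closed two-sided ideal and is not simple.

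The main obstacle is therefore not the construction of the ideal, which is essentially forced by the fixed point $\chi_0$ of the dual action, but the nontriviality argument; this is handled cleanly by appealing to the already-established faithfulness of the regular representation to separate $u^1$ from the identity.
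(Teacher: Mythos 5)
Your proof is correct, but it takes a genuinely different route from the paper's. The paper stays inside the crossed-product picture: having identified $A(R) \cong C^*(R,+) \rtimes R^{\times} = C(\widehat{R,+}) \rtimes R^{\times}$, it observes that the dual homeomorphisms $\widehat{\alpha_{\lambda}}$ all fix $\widehat{0}$, so that $C_0((\widehat{R,+}) \setminus \{ \widehat{0} \})$ is an $R^{\times}$-invariant ideal of $C^*(R,+)$, and then cites Williams (Section 3.5) for the fact that an invariant ideal induces a nontrivial ideal in the crossed product. You bypass the crossed product entirely: the trivial one-dimensional representation $u^n, s_r \mapsto 1$ visibly satisfies the three defining relations, so the universal property of $A(R)$ hands you a character $\varphi$, and $\ker \varphi$ is your proper ideal; nontriviality comes from separating $u^1$ from $u^0$ in the regular representation. (One small remark: you do not actually need the faithfulness of the regular representation here --- its mere existence as a representation of $A(R)$, automatic from universality, already forces $u^1 \neq u^0$, since translation by $1 \neq 0$ sends $e_0$ to $e_1$.) As you note yourself, your character is exactly the crossed-product extension of evaluation at the fixed point $\widehat{0}$ --- the paper uses this very functional $\pi$ in the theorem that follows --- so the two arguments share the same conceptual seed; but your execution is more elementary and more general (no induced-ideal machinery, and the argument works verbatim for any integral domain, not just a field), whereas the paper's formulation via the invariant ideal is what feeds directly into the next theorem's identification of the complementary summand as $C_0((\widehat{R,+}) \setminus \{ \widehat{0} \}) \rtimes R^{\times}$.
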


\begin{proof}  Notice that $C^*(R,+) = C(\widehat{R,+})$ where
$(\widehat{R,+})$ is the Pontryagin dual of the locally compact
abelian group $(R,+)$. Notice that the $*$-automorphisms
$\alpha_{\lambda}$ induce a homeomorphism
$\widehat{\alpha_{\lambda}}$ on $(\widehat{R,+})$ which has a fixed
point for each $\lambda$; in particular,
$\widehat{\alpha_{\lambda}}(\widehat{0}) = \widehat{0})$ for all
$\lambda$.  It follows that there is a nontrivial invariant ideal in
$C^*(R,+)$ for the action by $R^{\times}$ and hence, see
\cite[Section 3.5]{Williams:2007} there is a nontrivial induced
ideal in $C^*(R,+) \rtimes R^{\times}$.
\end{proof}

We can, however completely describe the ideal structure of $A(R)$ in
the case of a field.

\begin{thm} $A(R)$ is $*$-isomorphic to $\mathbb{C} \oplus A$
where $A$ is a simple $C^*$-algebra.  In fact, $A$ is $*$-isomorphic
to $C_0((\widehat{R,+}) \setminus \{ \widehat{0} \}) \rtimes
R^{\times}$.
\end{thm}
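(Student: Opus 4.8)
My plan is to prove everything inside the crossed product description $A(R)\cong C^{*}(R,+)\rtimes R^{\times}=C(\widehat{R,+})\rtimes R^{\times}$ furnished by the earlier proposition, and to extract the ideal structure from the dynamics of $R^{\times}$ on the compact dual $\widehat{R,+}$. The first step is to isolate the two properties of the action that drive the argument. The character $\widehat{0}$ is fixed, as already noted, and it is in fact the \emph{only} fixed point: if $\widehat{\alpha_{\lambda}}(\chi)=\chi$ for every $\lambda\in R^{\times}$ then $\chi(\lambda n)=\chi(n)$, hence $\chi((\lambda-1)n)=1$, for all $n\in R$ and all $\lambda$; since $R$ is a field, $(\lambda-1)n$ ranges over all of $R$ as $\lambda-1$ runs over $R^{\times}$ and $n$ over $R$, forcing $\chi=\widehat{0}$. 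Fixing a single $\lambda\neq 1$ in the same computation shows that the action is \emph{free} on $Y:=(\widehat{R,+})\setminus\{\widehat{0}\}$. Thus $Y$ is an open, $R^{\times}$-invariant set carrying a free action, while $\{\widehat{0}\}$ is a closed invariant singleton.

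Next I would build the two summands. The decomposition of $\widehat{R,+}$ into the closed invariant set $\{\widehat{0}\}$ and the open invariant set $Y$ yields an $R^{\times}$-equivariant exact sequence $0\to C_{0}(Y)\to C(\widehat{R,+})\to\mathbb{C}\to 0$, with the quotient map given by evaluation at $\widehat{0}$ and with $R^{\times}$ acting trivially on the quotient because $\widehat{0}$ is fixed. Since $R^{\times}$ is abelian, hence amenable (so that, as already recorded, full and reduced crossed products coincide and crossed products are exact), applying $-\rtimes R^{\times}$ gives $0\to C_{0}(Y)\rtimes R^{\times}\to A(R)\to\mathbb{C}\rtimes R^{\times}\to 0$. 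Writing $A:=C_{0}(Y)\rtimes R^{\times}$, it remains to collapse the quotient to $\mathbb{C}$ and to split the sequence. The splitting I would produce from the augmentation: the assignment $u^{n}\mapsto 1$, $s_{r}\mapsto 1$ respects relations (1)--(3) trivially and so defines a character $\varepsilon\colon A(R)\to\mathbb{C}$; its kernel is an ideal, and the goal is to show that $\ker\varepsilon$ coincides with $A$ and is a unital direct summand, so that $A(R)\cong\mathbb{C}\oplus A$.

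Finally I would establish that $A=C_{0}(Y)\rtimes R^{\times}$ is simple. Because $R^{\times}$ is discrete and amenable, full and reduced crossed products agree, and the standard simplicity criterion reduces the problem to showing that the action of $R^{\times}$ on $Y$ is topologically free and minimal. Topological freeness is immediate from the freeness established above, so the content is minimality, i.e.\ that every $R^{\times}$-orbit is dense in $\widehat{R,+}$, equivalently that $\emptyset$, $\{\widehat{0}\}$, and $\widehat{R,+}$ are the only closed invariant subsets. I expect this minimality step to be the main obstacle: orbit closures in the compact dual are not subgroups, so the density of orbits cannot be read off formally and must be extracted from the interaction of the multiplicative action with Pontryagin duality for the additive group of an arbitrary field. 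The second delicate point is the splitting in the previous paragraph, since realizing $\mathbb{C}$ as a genuine $C^{*}$-direct summand amounts to showing that the trivial representation is isolated from the remainder of the spectrum, i.e.\ that $\ker\varepsilon$ is unital; both of these are where the real work lies, in contrast to the purely formal exactness and functoriality used to set up the sequence.
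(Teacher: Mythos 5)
Your plan founders at the step you call ``collapse the quotient to $\mathbb{C}$,'' and this is not merely an unfinished step but one that is false as stated. Your equivariant sequence $0\to C_0(Y)\to C(\widehat{R,+})\to\mathbb{C}\to 0$ is correct, but since $\widehat{0}$ is a fixed point the group acts trivially on the quotient fiber, so the quotient term after applying $-\rtimes R^{\times}$ is $\mathbb{C}\rtimes R^{\times}\cong C^*(R^{\times})\cong C(\widehat{R^{\times}})$, which is one-dimensional only when $R^{\times}$ is trivial, i.e.\ only for $R=\mathbb{F}_2$. Consequently $A:=C_0(Y)\rtimes R^{\times}$ is an ideal with $A(R)/A\cong C^*(R^{\times})$, and your augmentation character $\varepsilon$ satisfies $A\subseteq\ker\varepsilon$ with $\ker\varepsilon/A$ isomorphic to the augmentation ideal of $C^*(R^{\times})$, which is nonzero whenever $R^{\times}$ has a nontrivial character. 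So $\ker\varepsilon=A$ cannot hold, and no splitting argument can produce $A(R)\cong\mathbb{C}\oplus A$ from your sequence; a sanity check with $R=\mathbb{F}_3$ gives $A(R)=C(\widehat{\mathbb{Z}/3})\rtimes\mathbb{Z}/2\cong\mathbb{C}^2\oplus M_2(\mathbb{C})$, whose quotient by $A\cong M_2(\mathbb{C})$ is $\mathbb{C}^2=C^*(\mathbb{Z}/2)$, not $\mathbb{C}$. For comparison, the paper's proof reaches $A(R)\cong\mathbb{C}\oplus\ker\pi$ by inducing the evaluation character at $\widehat{0}$ and then asserts that the natural map $\tau:C_0(Y)\rtimes R^{\times}\to\ker\pi$ is an isomorphism because the target of $\tau$ would be simple; your exactness bookkeeping, taken seriously, shows that surjectivity of $\tau$ faces exactly this $C^*(R^{\times})$-sized obstruction, so your route is the more careful one but it cannot terminate in the claimed decomposition without an input that neither you nor the paper supplies.

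The second gap is the one you flag yourself: you never prove minimality of the $R^{\times}$-action on $Y$, so the simplicity of $A$ is not established in your write-up; a plan that labels the main step ``the main obstacle'' and stops is not a proof. Here the comparison with the paper is instructive: the paper disposes of simplicity by asserting that $R^{\times}$ acts transitively on the nontrivial characters, which is true for finite fields but false for infinite ones (for $R=\mathbb{Q}$ the dual of $(R,+)$ is the uncountable compact group $\mathbb{A}_{\mathbb{Q}}/\mathbb{Q}$, while each orbit is countable), so the correct substitute is precisely the minimality-plus-topological-freeness criterion you invoke. Your freeness computation is correct and is a genuine improvement on the paper: if $\chi(\lambda n)=\chi(n)$ for all $n$ with $\lambda\neq 1$, then $\chi((\lambda-1)n)=1$ for all $n$, and since $(\lambda-1)R=R$ in a field this forces $\chi=\widehat{0}$. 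So you have repaired the freeness half of the simplicity argument, but the minimality half --- the actual content, requiring density of $R^{\times}$-orbits in $\widehat{R,+}$ --- is missing, and together with the impossible quotient-collapse step this leaves the proposal with two genuine gaps rather than a completed alternative proof.
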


\begin{proof} We use the nontrivial invariant ideal from the previous
proposition.  In particular, since $C^*(R,+) = C(\widehat{R,+})$ let
$\pi$ be the multiplicative linear function given on
$C(\widehat{R,+})$ by evaluation at $\widehat{0}$.  Further, if we
let $\widehat{\alpha_{\lambda}}$ be the induced homeomorphism on
$(\widehat{R,+})$ given by the $*$-automorphism $\alpha_{\lambda}$
for all $ \lambda \in R^{\times}$.  The induced representation,
$\pi$ is a multiplicative linear functional and hence has range
$\mathbb{C}$. Hence, $A(R) \cong \mathbb{C} \oplus \ker \pi$.  We
now wish to describe $\pi$. So let $\sigma: C_0(\widehat{R,+})
\rightarrow C_0( (\widehat{R,+}) \setminus \{ 0 \} )$ be the
restriction mapping. Further, if $\lambda \in R^{\times}$ then $
\widehat{\alpha_{\lambda}}$, the homeomorphism on $\widehat{R,+}$
induced by the automorphism $\alpha_{\lambda}$, then the range of
$\sigma$ is invariant under $\widehat{\alpha_{\lambda}}$ and hence
there is a map $\tau: C_0((\widehat{R,+}) \setminus \{ 0 \}) \rtimes
R^{\times} \rightarrow \ker \pi$.  But since $R^{\times}$ acts
transitively on $(\widehat{R,+})$ the crossed product
$C_0((\widehat{R,+}) \setminus \{ 0 \}) \rtimes R^{\times}$ is
simple and hence the map $\tau$ must be an isomorphism.\end{proof}

\section{Semicrossed products for discrete semigroups}

The preceding construction suggests that for non-field integral
domains the crossed product may be replaced by a semicrossed
product.  We quickly outline the relevant construction referring to
\cite{Peters:1984} for motivation and to \cite{Duncan-Peters} for
more information about this semicrossed product.

Given a compact Hausdorff space $X$ we say that a semigroup $S$ acts
on $X$ via continuous maps if for each $s \in S$ there is a
continuous map $\tau_s: X \rightarrow X$ with $ \tau_{s} \circ
\tau_{t} = \tau_{st}$.  If $S$ is unital with identity $0$ we will
assume that $ \tau_0$ is the identity map.  Say that a pair $(\pi,
S_t)$ is an isometric covariant representation of $(X, S, \tau_s)$
if $\pi$ is a representation of $C(X)$ on a Hilbert space
$\mathcal{H}$ and for each $t \in S$, $S_t$ is an isometry in
$B(\mathcal{H})$ such that $ S_t \pi(f(x)) = \pi(f(\tau_t(x)))S_t$
for all $ x \in X$.

It is not hard to see that given the triple $(X,S, \tau_s)$, there
is a nontrivial isometric covariant representation.  The
construction follows in the same manner as in \cite{Peters:1984}, we
only outline the idea here.  Let $\mathcal{H} = \ell^2(X,S)$ where
this latter Hilbert space is sequences indexed over elements of $S$
with entries from $x$, with canonical basis $\{ e_s \}$.  Define
$\pi: C(X) \rightarrow B(\mathcal{H})$ by $ \pi(f(x)) =
(f(\tau_s(x)))_{s \in S}$.  Then set $S_t (e_s) = e_{st}$ and extend
by linearity.  Then $(\pi,S_t)$ is an isometric covariant
representation of $(X,S,\tau_s)$.

We say that the universal operator algebra generated by all
isometric covariant representations of $(X,S,\tau_s)$ is the
semicrossed product of $X$ by $S$ via $\tau$.  We denote this
algebra as $C(X) \rtimes_{\tau} S$.

As examples notice that if $\alpha$ is a single endomorphism of a
$C^*$-algebra then we are in the situation described in Peter's
original work \cite{Peters:1984}, where the semigroup is
$\mathbb{Z}^+$.  For an example on the opposite end of the spectrum
we can view the semicrossed product of
\cite{Davidson-Katsoulis:2007} as a semicrossed product where the
monoid is the free monoid on $n$ generators.

Returning to an integral domain $R$, we let $\alpha_r$ be the
$*$-endomorphism of $C^*(R,+)$ induced by the group endomorphism
given by left multiplication by $r$.  This gives a map from
$R^{\times}$ into the set of $*$-endomorphisms of $C^*(R,+)$. Notice
that since we are in an integral domain each of these endomorphisms
is injective.  However they are only surjective when $r$ is a unit.

\begin{prop} The algebra $A(R)$ is completely isometrically
isomorphic to $C^*(R,+) \rtimes_{\alpha} R^{\times}$. \end{prop}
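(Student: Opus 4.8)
The plan is to mirror the argument used for the field case (the earlier Proposition identifying $A(R)$ with $C^*(R,+) \rtimes R^{\times}$), establishing a two-sided estimate between the universal semicrossed product norm and the universal $A(R)$ norm by exhibiting mutually inverse completely contractive maps. The guiding principle is that the defining relations of an isometric representation of $R$ are exactly the covariance relations for the dynamical system $(C^*(R,+), R^{\times}, \alpha)$, once we read $U^n$ as the unitary in $C^*(R,+)$ corresponding to $n \in (R,+)$ and $S_r$ as the isometry implementing the endomorphism $\alpha_r$.

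First I would construct a completely contractive map $\iota : A(R) \to C^*(R,+) \rtimes_\alpha R^{\times}$. Inside the semicrossed product there is a canonical copy $\pi_0$ of $C^*(R,+)$ and a family of isometries $\rho_0(r)$, one for each $r \in R^{\times}$, satisfying the covariance relation $\rho_0(r)\,\pi_0(U^n) = \pi_0(\alpha_r(U^n))\,\rho_0(r) = \pi_0(U^{rn})\,\rho_0(r)$. I would check that setting $s_r := \rho_0(r)$ and $u^n := \pi_0(U^n)$ yields a collection satisfying relations (1)--(3) of an isometric representation of $R$: relation (1) comes from $\alpha_r \alpha_t = \alpha_{rt}$ together with the semigroup law for $\rho_0$, relation (2) is the group law in $(R,+)$, and relation (3) is precisely the covariance relation rewritten. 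By universality of $A(R)$ this furnishes a completely contractive homomorphism $\iota$.

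Next I would go the other direction. Any isometric representation $\{U^n, S_r\}$ of $R$ on a Hilbert space gives, by restricting to the unitaries $\{U^n\}$, a representation $\pi$ of $C^*(R,+)$; relation (3) says exactly that $\{S_r\}$ implements $\alpha$ covariantly, i.e. $(\pi, S_r)$ is an isometric covariant representation of the semicrossed-product system. Hence by universality of $C^*(R,+) \rtimes_\alpha R^{\times}$ there is an induced completely contractive map in the reverse direction, and evaluating it against the generators shows it is a two-sided inverse of $\iota$ on the dense subalgebra of polynomials in the generators. This forces $\iota$ to be a complete isometry.

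The main obstacle, and the place where care is needed beyond the field case, is the verification that $\pi$ is a genuine $*$-representation of the full group $C^*$-algebra $C^*(R,+)$ rather than merely a representation of the $*$-subalgebra generated by the $U^n$. Since the endomorphisms $\alpha_r$ are injective but not surjective, the $S_r$ are proper isometries and one must confirm that the semicrossed-product universal property is stated for covariant pairs in which the semigroup acts by (possibly non-invertible) endomorphisms; this is exactly the setting of the construction in the previous section, so I would invoke that construction to guarantee the existence of enough covariant representations and thereby identify the two universal objects. Once the generators-and-relations match up on both sides, the complete isometry follows formally.
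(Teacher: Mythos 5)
Your proposal is correct and follows essentially the same route as the paper: the paper's proof likewise establishes a bijective correspondence between isometric representations of $R$ and isometric covariant representations of $(C^*(R,+), R^{\times}, \alpha)$ via $u^n \leftrightarrow \pi(U^n)$, $s_r \leftrightarrow S_r$, and deduces the complete isometry from the two universal properties (your explicit construction of $\iota$ through the canonical pair $(\pi_0, \rho_0)$ is just the field-case argument repackaged, which the paper leaves implicit). The ``main obstacle'' you flag at the end is actually a non-issue: since $(R,+)$ is a discrete group, any family of unitaries satisfying the group relations integrates automatically to a $*$-representation of the full group $C^*$-algebra $C^*(R,+)$, so no appeal to the Peters-type orbit representation is needed there.
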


\begin{proof} We will show that any isometric representation of
$R$ gives rise to an isometric covariant representation of the pair
$(C^*(R,+) \rtimes_{\alpha} R^{\times})$ and vice-versa, hence the
two algebras will be completely isometrically isomorphic.

So let $ \{ U^n: n \in R \}$ and $ \{ S_r \in R^{\times} \}$ be an
isometric representation of $R$.  Then the map $n \mapsto U^n$ gives
rise to a representation of $C^*(R,+)$, call it $\pi$.  Further the
isometries $S_r$ will satisfy $ S_rU^nS_r^* = U^{rn}$ and hence the
pair $(\pi, \{S_r \})$ will be an isometric covariant representation
of $ (C^*(R, +), R^{\times}, \alpha)$.

Let $(\pi,\{S_r\})$ be an isometric covariant representation of the
algebra $ (C^*(R,+), R^{\times}, \alpha)$.  Then $\{ \pi(u^n) \}$ is
a collection of unitaries and $ \{ S_r \}$ is a collection of
isometries that trivially satisfy the first two conditions of an
isometric representation of $R$.  Further $S_rU^nS_r^* = U^{rn}$ so
that $S_rU^n = U^{rn}S_r$ for all $r \in R^{\times}$ so that we have
an isometric representation of $R$.\end{proof}

Notice that in the case of the algebra $A(R)$ for an integral domain
$R$ the semigroup $R^{\times}$ will always be commutative with no
torsion. In addition the semigroup $R^{\times}$ can be viewed as a
spanning cone for the group $Q(R)^{\times}$ (see \cite[Page
60]{Paulsen:2002} for the definition of a spanning cone for a
group).

We can actually improve our characterization of $A(R)$ as a
semicrossed product by looking at a more tractable semigroup. Let
$U(R)$ denote the group of units in $R$. Now $R^{\times}$ is a
commutative monoid which contains $U(R)$ as a normal submonoid. We
let $M(R)$ denote the monoid $R^{\times}/U(R)$. Notice that
$R^{\times} \subseteq Q(R)^{\times}$ and further that $M(R)
\subseteq Q(R)^{\times}/U(R)$, this latter group we call $G(R)$.

For $u \in U(R)$ let $\alpha_u: C^*(R,+) \rightarrow C^*(R,+)$ be
the $*$-automorphism induced by the automorphism of $(R,+)$ that
corresponds to left multiplication by $u$. Next for $r \in M(R)$
define a covariant representation of the triple $(C^*(R,+), U(R),
\alpha)$ by $\beta_r(U^n) = U^{nr}$, $\rho(S_u) = S_u$.  This
covariant representation induces a $*$-endomorphism of $C^*(R,+)
\rtimes_{\alpha} U(R)$.  Hence, $\beta$ gives rise to a map from
$M(R)$ into the set of $*$-endomorphisms of $C^*(R,+)
\rtimes_{\alpha} U(R)$.

\begin{thm} The algebra $A(R)$ is completely isometrically
isomorphic to $ (C^*(R,+) \rtimes_{\alpha} U(R)) \rtimes_{\beta}
M(R) $, and the diagonal algebra $A(R) \cap A(R)^* = (C^*(R,+)
\rtimes_{\alpha} U(R))$. \end{thm}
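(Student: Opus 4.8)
The plan is to establish both claims at once by recognizing that the iterated semicrossed product $B \rtimes_{\beta} M(R)$, where $B := C^*(R,+) \rtimes_{\alpha} U(R)$, is universal for exactly the same family of representations as $A(R)$, and then to pin down the diagonal by a gauge argument. First I would run the correspondence between representations in the spirit of the previous proposition. Given an isometric representation $\{U^n\}$, $\{S_r\}$ of $R$, the second lemma above ensures that $S_u$ is a \emph{unitary} for every unit $u \in U(R)$, so the pair $(\{U^n\}, \{S_u\}_{u \in U(R)})$ is an honest covariant representation of $(C^*(R,+), U(R), \alpha)$ and thus integrates to a $*$-representation $\pi$ of the base algebra $B$. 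For the monoid layer I would fix coset representatives for $M(R) = R^{\times}/U(R)$ and check, using relations (1) and (3), that the corresponding isometries $S_r$ satisfy the covariance relation for $\beta$, namely $S_r \pi(b) = \pi(\beta_r(b)) S_r$ for all $b \in B$; this exhibits an isometric covariant representation of $(B, M(R), \beta)$. Conversely, any isometric covariant representation of this triple unwinds into unitaries $\{U^n\}$ and isometries $\{S_r\}$ obeying (1)--(3), precisely as in the proof of the earlier proposition.

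The one subtlety is independence of the coset representatives. Replacing a representative $r$ by $ru$ with $u \in U(R)$ changes $S_r$ into $S_{ru} = S_r S_u$ by relation (1), and since $S_u$ is already carried by the represented copy of $B$, this ambiguity is absorbed into the base algebra and leaves the covariant representation of $(B, M(R), \beta)$ unchanged; this is exactly the compatibility that makes the endomorphisms $\beta_r$ well defined on $B$ in the first place. Because $A(R)$ and $B \rtimes_{\beta} M(R)$ are universal for the same class of completely contractive representations, the resulting map is a complete isometry, proving the first assertion.

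For the diagonal, I would exploit the structure recorded after the earlier proposition, namely that $M(R)$ sits inside the discrete abelian group $G(R) = Q(R)^{\times}/U(R)$ as a spanning cone with $M(R) \cap M(R)^{-1} = \{e\}$. Dualizing, the compact group $\widehat{G(R)}$ acts on $A(R)$ by completely isometric automorphisms $\gamma_{\chi}$ that fix $B$ pointwise and send $s_r$ to $\chi(\bar r)\, s_r$, where $\bar r$ is the class of $r$ in $G(R)$. Averaging $\gamma_{\chi}$ over $\widehat{G(R)}$ produces a completely contractive idempotent expectation $E \colon A(R) \to B$, and more generally Fourier coefficient maps $E_{\bar r}$ supported on $M(R)$. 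If $x$ and $x^*$ both lie in $A(R)$, then the nonzero-degree coefficients of $x$ are supported on $M(R)$ while those of $x^*$ are supported on $M(R)^{-1}$; since these cones meet only at the identity, every coefficient of positive degree must vanish and $x = E(x) \in B$. As $B$ is a genuine crossed product $C^*$-algebra we trivially have $B \subseteq A(R) \cap A(R)^*$, so equality follows.

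The step I expect to be the main obstacle is making this gauge and Fourier analysis rigorous in the nonselfadjoint setting: one must verify that $\gamma_{\chi}$ is genuinely point-norm continuous and completely isometric, that the averaged expectation $E$ and the coefficient maps $E_{\bar r}$ are well defined and return elements of $A(R)$, and that the support statements above are legitimate. All of this rests on the cancellative, torsion-free, abelian structure of $M(R) \subseteq G(R)$ noted after the earlier proposition, so once that machinery is in place the identification of the diagonal follows.
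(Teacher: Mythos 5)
Your proof of the isomorphism $A(R) \cong (C^*(R,+)\rtimes_{\alpha} U(R)) \rtimes_{\beta} M(R)$ is essentially the paper's: both arguments run the bijective correspondence between isometric representations of $R$ and isometric covariant representations of the triple $(B, M(R), \beta)$ with $B = C^*(R,+)\rtimes_{\alpha}U(R)$, and you additionally (and correctly) flag the representative-dependence that the paper passes over silently --- replacing a representative $r$ by $ru$ with $u \in U(R)$ changes $\beta_r$ by the inner automorphism $\mathrm{Ad}(s_u)$ of $B$, so the covariance condition at the level of representations is unaffected. Where you genuinely diverge is the diagonal statement: the paper disposes of it in one line by citing Corollary 2 of Duncan--Peters, whereas you reprove it directly via a gauge action of the compact group $\widehat{G(R)}$, Fourier coefficient maps $E_{\bar r}$, and the observation that $M(R) \cap M(R)^{-1} = \{e\}$ in $G(R)$ (true in general: if $\bar r$ is invertible in $M(R) = R^{\times}/U(R)$ then $rs \in U(R)$ for some $s$, forcing $r \in U(R)$). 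This is sound and is essentially the expectation machinery underlying the cited result, so your route buys self-containedness at the price of the technical work you yourself flag. Two points to nail down when carrying it out: (i) to compare coefficients of $x$ and $x^*$ you need $\gamma_{\chi}$ acting on a $C^*$-algebra containing both $A(R)$ and $A(R)^*$; the fix is to note that the twist $s_r \mapsto \chi(\bar r)s_r$ preserves relations (1)--(3) and hence defines a $*$-automorphism of the universal $C^*$-algebra for isometric representations of $R$, restricting to $\gamma_{\chi}$ on $A(R)$, after which $E_{\bar g}(x)^* = E_{\bar g^{-1}}(x^*)$ makes literal sense and a standard Fej\'er-type summability argument upgrades ``all nontrivial coefficients vanish'' to $x = E(x) \in B$; (ii) the inclusion $B \subseteq A(R) \cap A(R)^*$ is not quite ``trivial'': it requires the canonical map $B \rightarrow A(R)$ to be (completely) isometric, which holds because $U(R)$ is abelian, hence amenable, so the full and reduced crossed products coincide and the regular representation of $R$ restricts to a faithful covariant representation of $(C^*(R,+), U(R), \alpha)$.
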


\begin{proof} We will show that any isometric representation of
$R$ gives rise to an isometric covariant representation of the pair
\[(C^*(R,+) \rtimes_{\alpha} U(R), M(R))\] and vice-versa, hence the
two algebras will be completely isometrically isomorphic.

So let $\{ U^n: n \in R \}$ and $ \{ S_r: r \in R^{\times} \}$ be an
isometric representation of $R$.  Then define a covariant
representation of $(C^*(R,+), U(R), \alpha)$ by $ n \mapsto U^n$ and
$r \mapsto S_r$ for all $r \in U(R)$.  This yields a representation
$\pi$ of $C^*(R,+) \rtimes_{\alpha}U(R)$.  Next notice that $
S_{\lambda} \pi(U^n) = \pi(U^{\lambda n}) S_{\lambda}$ for all
$\lambda \in M(R)$, and $ S_{\lambda} S_r = S_r S_{\lambda}$ for all
$ r \in U(R), \lambda \in M(R)$.  Hence we have an isometric
covariant representation of $(C^*(R,+) \rtimes_{\alpha} U(R), M(R),
\beta)$.

Finally we take an isometric covariant representation $(\pi, S)$ of
the triple $(C^*(R,+) \rtimes_{\alpha} U(R), M(R), \beta)$.  Define
$U^n = \pi(n)$ and $S_r = \pi(S_r)$ if $r \in U(R)$, else $S_r =
S_r$. This gives rise to an isometric representation of $R$.

The last result follows from Corollary 2 of \cite{Duncan-Peters}.
\end{proof}

Notice that if $R$ is not a field $U(R)$ does not act transitively
on the nonunital subalgebra of $C^*(R,+)$, as in the case of a
field.  In fact we have the following fact.

\begin{cor} The diagonal is isomorphic to $\mathbb{C} \oplus A$
where \[A \cong C_0((\widehat{G,+}) \setminus \widehat{0}) \times
U(R). \] Further, $A$ is simple if and only if $R$ is a field.
\end{cor}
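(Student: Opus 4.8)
The plan is to reduce everything to the previous theorem, which identifies the diagonal $A(R)\cap A(R)^*$ with the crossed product $C^*(R,+)\rtimes_{\alpha}U(R)$, and then to run the same argument used for the field case (the Theorem of Section 2) with $U(R)$ in place of $R^{\times}$. First I would write $C^*(R,+)=C(\widehat{R,+})$ via Gelfand duality, so that the $*$-automorphisms $\alpha_u$ for $u\in U(R)$ induce homeomorphisms $\widehat{\alpha_u}$ of $\widehat{R,+}$. As before, each $\widehat{\alpha_u}$ fixes the trivial character $\widehat 0$, so evaluation at $\widehat 0$ (sending the generating unitaries $u^n$ to $\widehat 0(n)=1$ and the isometries $s_u$, $u\in U(R)$, to $1$) is a well-defined one-dimensional representation of the crossed product. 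Exactly as in the field case this splits off a copy of $\mathbb{C}$, and its kernel is the ideal associated to the invariant open set $\widehat{R,+}\setminus\{\widehat 0\}$; thus the diagonal is $\mathbb{C}\oplus A$ with $A\cong C_0(\widehat{R,+}\setminus\{\widehat 0\})\rtimes_{\alpha}U(R)$.

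It remains to characterize when $A$ is simple. One direction is free: if $R$ is a field then $U(R)=R^{\times}$, so $A$ is precisely the simple crossed product produced by the Theorem of Section 2, hence simple. The substance is the converse, and I would prove its contrapositive: if $R$ is not a field I must exhibit a nontrivial invariant ideal of $A$, equivalently a proper nonempty closed $U(R)$-invariant subset of $\widehat{R,+}\setminus\{\widehat 0\}$. Choose a nonzero nonunit $a\in R$ and let $I=aR$, a proper nonzero ideal. Its annihilator $I^{\perp}=\{\chi\in\widehat{R,+}:\chi|_I\equiv 1\}$ is a closed subgroup, and it is invariant under all of $R^{\times}$ (in particular under $U(R)$): for $\lambda\in R^{\times}$ and $\chi\in I^{\perp}$ one has $(\lambda\cdot\chi)(i)=\chi(\lambda i)=1$ for every $i\in I$, since $\lambda i\in I$. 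Because $R/I\neq 0$ the subgroup $I^{\perp}$ strictly contains $\{\widehat 0\}$, while because $I\neq 0$ and the characters of the discrete group $(R,+)$ separate points there is some $\chi$ with $\chi(a)\neq 1$, so $I^{\perp}\neq\widehat{R,+}$. Hence $I^{\perp}\setminus\{\widehat 0\}$ is a proper nonempty closed invariant subset, yielding a proper nonzero invariant ideal $C_0((\widehat{R,+}\setminus\{\widehat 0\})\setminus I^{\perp})\rtimes U(R)$ of $A$.

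The main obstacle is this last step: producing the invariant ideal in the non-field case and, dually, confirming that no such obstruction exists when $R$ is a field (which I am handing off to the earlier Theorem via the transitivity of the $R^{\times}$-action). The key point is that passing from $R^{\times}$ to the proper subgroup $U(R)$ shrinks the orbits enough that the annihilator of a proper ideal—automatically $R^{\times}$-invariant and a fortiori $U(R)$-invariant—is no longer dense, so the transitivity that forces simplicity in the field case genuinely fails. Once the invariant set is in hand, the standard correspondence between closed invariant subsets and ideals of a crossed product (as in \cite[Section 3.5]{Williams:2007}) converts it into the desired ideal, completing the equivalence.
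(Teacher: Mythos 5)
Your proposal is correct, and it actually supplies more than the paper's own proof does. The paper disposes of this corollary in a single sentence: non-simplicity of $A$ ``follows from the fact that $U(R)$ does not act transitively\ldots unless $R^{\times} = U(R)$,'' with the splitting $\mathbb{C} \oplus A$ tacitly inherited from the evaluation-at-$\widehat{0}$ argument of the field-case theorem — which is exactly how you obtain it, and your reading of the paper's $(\widehat{G,+})$ as a typo for $(\widehat{R,+})$ is the right one, since the diagonal is $C^*(R,+) \rtimes_{\alpha} U(R)$. Where you genuinely diverge is the non-field direction: instead of appealing to failure of transitivity, you exhibit a concrete closed $U(R)$-invariant set, namely the annihilator $I^{\perp}$ of the proper ideal $I = aR$ for a nonzero nonunit $a$, verifying via Pontryagin duality that $I^{\perp} \setminus \{\widehat{0}\}$ is nonempty (a nontrivial character of $R/I$ pulls back) and proper (characters of the discrete group $(R,+)$ separate $a$ from $0$), and then converting it into a nontrivial invariant ideal by the correspondence in \cite[Section 3.5]{Williams:2007}. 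This is a substantive improvement rather than mere added detail: non-transitivity of an action does not by itself preclude simplicity of the crossed product — the irrational rotation action of $\mathbb{Z}$ on the circle is non-transitive yet minimal, and the resulting algebra is simple — so what the paper's one-liner really requires is non-minimality, and your annihilator construction is precisely the missing closed invariant set that certifies it. The forward direction (field implies simple, via $U(R) = R^{\times}$ and the Section 2 theorem) you hand off exactly as the paper implicitly does, so the two arguments agree there.
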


\begin{proof} That $A$ is not simple follows from the fact that
$U(R)$ does not act transitively on $C_0(\widehat{C^*(G,+)}
\setminus 0)$ unless $R^{\times} = U(R)$.\end{proof}

We now prove some other facts about the relationship between the
integral domain $R$ and the structure of the algebra $A(R)$.

\begin{prop} $A(R) \cong C(X) \rtimes S$, where $S$ is a monoid with
no nontrivial invertible elements if and only if the identity of $R$
is the only unit.  Further if $U(R) = \{ 1 \}$ then $M(R)$ will not
be finitely generated.
\end{prop}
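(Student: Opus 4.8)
The plan is to run everything through the structure theorem $A(R) \cong (C^*(R,+) \rtimes_\alpha U(R)) \rtimes_\beta M(R)$ and, above all, through its identification of the diagonal $A(R) \cap A(R)^* = C^*(R,+) \rtimes_\alpha U(R)$. The decisive observation for the first equivalence is that whenever $A(R) \cong C(X) \rtimes S$ for a monoid $S$ with no nontrivial invertible elements, the diagonal of the semicrossed product collapses to the base algebra $C(X)$ (this is exactly the diagonal computation from Corollary 2 of \cite{Duncan-Peters} that was used above, applied now with base $C(X)$ and acting monoid $S$). Since a completely isometric isomorphism extends to a $*$-isomorphism of $C^*$-envelopes and hence carries $A(R) \cap A(R)^*$ onto the diagonal of $C(X) \rtimes S$, any such decomposition forces $C^*(R,+) \rtimes_\alpha U(R)$ to be commutative.

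So first I would record the commutativity criterion that $C^*(R,+) \rtimes_\alpha U(R)$ is commutative if and only if $U(R) = \{1\}$. One direction is immediate, since $U(R) = \{1\}$ gives $C^*(R,+) \rtimes_\alpha U(R) = C^*(R,+) = C(\widehat{R,+})$. For the converse, if $u \in U(R)$ with $u \neq 1$ then the implementing unitary $S_u$ satisfies $S_u U^1 = U^u S_u$ with $U^u \neq U^1$, so $S_u$ and $U^1$ do not commute. Combined with the diagonal argument this yields the ``only if'' direction. The ``if'' direction is then essentially free: when $U(R) = \{1\}$ the structure theorem reads $A(R) \cong C(\widehat{R,+}) \rtimes_\beta R^{\times}$, where $X = \widehat{R,+}$ is compact Hausdorff and $S = R^{\times} = M(R)$ is a commutative monoid whose invertible elements are precisely the ring units $U(R) = \{1\}$.

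For the final assertion I would show that a finitely generated $M(R)$ is incompatible with the field of quotients. With $U(R) = \{1\}$ we have $M(R) = R^{\times}$ and $G(R) = Q(R)^{\times}$, and since every element of $Q(R)^{\times}$ is a ratio $ab^{-1}$ with $a,b \in R^{\times}$, the monoid $M(R)$ generates $G(R)$ as a group; thus if $M(R)$ were finitely generated as a monoid, $Q(R)^{\times}$ would be a finitely generated abelian group. I would rule this out by cases on the characteristic, using that $R$ is not a field so $Q(R)$ is infinite: in characteristic $0$ the subgroup $\mathbb{Q}^{\times}$ is not finitely generated; in positive characteristic with $Q(R)$ algebraic over its prime field, $Q(R)^{\times}$ is an infinite torsion group; and otherwise $Q(R)$ contains a rational function field $\mathbb{F}_p(t)$, whose multiplicative group is free abelian of infinite rank on the monic irreducibles. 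Since finitely generated abelian groups are Noetherian, a finitely generated subgroup cannot exhaust $Q(R)^{\times}$ in any of these cases, a contradiction.

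The step I expect to be the main obstacle is the ``only if'' direction of the first assertion, namely justifying that the abstract decomposition $C(X) \rtimes S$ genuinely has diagonal $C(X)$ and that this diagonal is an invariant of the completely isometric isomorphism class of $A(R)$; once the diagonal is pinned down, the commutativity criterion finishes the argument. The characteristic casework in the last assertion is routine but relies on the standing assumption that $R$ is not a field, since the degenerate field case $R = \mathbb{F}_2$ has $M(R)$ trivial and hence finitely generated.
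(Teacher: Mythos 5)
Your handling of the first equivalence is essentially the paper's own argument: both directions run through the structure theorem $A(R) \cong (C^*(R,+) \rtimes_{\alpha} U(R)) \rtimes_{\beta} M(R)$, the identification of the diagonal $A(R) \cap A(R)^* = C^*(R,+) \rtimes_{\alpha} U(R)$ via Corollary 2 of \cite{Duncan-Peters}, and the observation that a unit $u \neq 1$ gives $S_u U^n \neq U^n S_u$, so the diagonal is noncommutative while the diagonal of $C(X) \rtimes S$ (for $S$ with no nontrivial invertibles) is the commutative algebra $C(X)$; your extra care about the diagonal being an invariant of completely isometric isomorphism is a point the paper leaves implicit, not a new idea. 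Where you genuinely diverge is the final assertion. The paper stays entirely inside $R$ with a short Euclid-style trick: if $M(R) = R^{\times}$ were generated by $x_1, \dots, x_n$, then the nonunit $1 + x_1 x_2 \cdots x_n$ would factor as a monomial $x_1^{k_1} \cdots x_n^{k_n}$ with some $k_j \neq 0$, and subtracting shows $x_1$ divides $1$, contradicting $U(R) = \{1\}$. You instead pass to the quotient field: since $M(R)$ generates $Q(R)^{\times}$ as a group, finite generation of $M(R)$ would make $Q(R)^{\times}$ a finitely generated abelian group, which you exclude by characteristic casework using Noetherianness of finitely generated abelian groups and the structure of $\mathbb{Q}^{\times}$, of infinite torsion groups, and of $\mathbb{F}_p(t)^{\times}$. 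Your route is sound but heavier, importing field-theoretic structure where the paper needs only divisibility in $R$; two small remarks: your characteristic-$0$ case is vacuous, since $U(R) = \{1\}$ forces $-1 = 1$ and hence characteristic $2$, and likewise your ``$Q(R)$ algebraic over its prime field'' case cannot occur for a non-field $R$ (a domain algebraic over $\mathbb{F}_p$ is a field), though the arguments you give inside those cases are correct. One genuine credit to your version: you correctly flag that the assertion requires $R$ not to be a field, since $R = \mathbb{F}_2$ has $U(R) = \{1\}$ and $M(R)$ trivial, hence finitely generated; the paper's statement omits this hypothesis, and its proof uses it only implicitly (the element $1 + x_1 \cdots x_n$ can vanish, and the generator $x_1$ can be trivial, exactly in that degenerate case).
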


\begin{proof} If the identity of $R$ is the only unit, then we
have \[C^*(R,+) \times U(R) \cong C^*(R,+)\] and $M(R)$ has no
nontrivial invertible elements, else $M(R) \cap U(R) \neq \{ 1 \}$.

Notice that if $A(R) \cong C(X) \times S$ where $S$ is a monoid with
no nontrivial invertible elements then the diagonal algebra $A(R)
\cap A(R)^* = C(X)$.  However, if $x \in U(R)$ with $x \neq 1$, then
$x \not\in M(R)$ and hence $S_x \in C^*(R,+) \times U(R) = A(R) \cap
A(R)^*$. But notice that $S_xU^n \neq U^nS_x$ unless $ x = 1$ and
hence $C^*(R,+) \times U(R)$ is not commutative.

Assume now that $U(R) = \{ 1 \}$ and $M(R)$ is finitely generated by
the set $ \{ x_1, x_2, \cdots, x_n \}$, then $1 + x_1x_2 \cdots x_n
\not\in U(S)$ so \[ 1+ x_1x_2 \cdots x_n = x_{1}^{k_1}x_2^{k_2}
\cdots x_n^{k_n}\] with at least one $k_j \neq 0$.  We will assume
without loss of generality that $k_1 \neq 0$.  Then $1 = x_1(x_2 x_3
\cdots x_n - x_1^{k_1-1}x_2^{k_2} \cdots x_n^{k_n})$ which implies
that $x_1$ is a unit yielding a contradiction. \end{proof}

As a corollary we have the following.

\begin{prop} If $R$ is a unique factorization domain and
$A(R) \cong A \times \mathbb{Z}^+$ where $A$ is a $C^*$-algebra,
then $A$ is not commutative and $U(R)$ is not trivial. \end{prop}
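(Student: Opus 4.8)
The plan is to recover the two pieces of structure---the diagonal $C^*$-subalgebra and the acting monoid---from the abstract isomorphism $A(R) \cong A \rtimes \mathbb{Z}^+$, and then feed the monoid into the previous proposition. First I would observe that $\mathbb{Z}^+$ has no nontrivial invertible element, so exactly as in the proof of the previous proposition the diagonal $A(R) \cap A(R)^*$ of the semicrossed product $A \rtimes \mathbb{Z}^+$ is precisely the copy of $A$. On the other hand, the structure theorem identifies $A(R) \cap A(R)^*$ with $C^*(R,+) \rtimes_{\alpha} U(R)$ and exhibits the acting monoid as $M(R)$. Comparing the two descriptions gives $A \cong C^*(R,+) \rtimes_{\alpha} U(R)$ together with $M(R) \cong \mathbb{Z}^+$.

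Next I would use the unique factorization hypothesis. In a UFD the monoid $M(R) = R^{\times}/U(R)$ is the free abelian monoid on the set of associate-classes of prime elements, so the isomorphism $M(R) \cong \mathbb{Z}^+$ forces $R$ to have a single prime up to associates; in particular $M(R)$ is finitely generated (by one element). The previous proposition asserts that $U(R) = \{1\}$ forces $M(R)$ to fail to be finitely generated, so its contrapositive yields $U(R) \neq \{1\}$. This establishes that $U(R)$ is not trivial.

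Finally, for the noncommutativity of $A$ I would choose a unit $u \in U(R)$ with $u \neq 1$, which exists by the previous paragraph. In $A \cong C^*(R,+) \rtimes_{\alpha} U(R)$ the covariance relation reads $S_u U^n = U^{un} S_u$, and since $R$ is a domain and $u \neq 1$ we have $un \neq n$ for every nonzero $n$; hence $S_u$ and $U^n$ do not commute for such $n$, exactly as recorded in the proof of the previous proposition. Therefore $A$ is noncommutative, completing the argument.

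I expect the main obstacle to be the first step: rigorously extracting the monoid $M(R) \cong \mathbb{Z}^+$ from the bare algebra isomorphism $A(R) \cong A \rtimes \mathbb{Z}^+$. One must know that the diagonal and the acting semigroup together form a genuine invariant of the semicrossed product, so that the two decompositions of $A(R)$ can be matched; this is where the structural results for these semicrossed products (via \cite{Duncan-Peters} and the conjugacy results of \cite{Davidson-Katsoulis:2006}) do the real work, whereas the ring-theoretic consequences are then immediate from the earlier proposition.
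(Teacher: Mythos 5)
Your second and third paragraphs are fine and essentially mirror the paper: granting that $M(R)$ is finitely generated, the contrapositive of the preceding proposition gives $U(R) \neq \{1\}$, and a unit $u \neq 1$ makes the diagonal noncommutative via the covariance relation $S_u U^n = U^{un} S_u$ with $un \neq n$. The genuine gap is exactly where you suspected it, and it is not filled by the sources you invoke. Matching diagonals is legitimate: $A(R) \cap A(R)^*$ is an intrinsic invariant of the operator algebra, and Corollary 2 of \cite{Duncan-Peters} identifies it as $C^*(R,+) \rtimes_{\alpha} U(R)$ on one side and as $A$ on the other, so $A \cong C^*(R,+) \rtimes_{\alpha} U(R)$ is fine. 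But the acting monoid is \emph{not} known to be an invariant of the semicrossed product, so the conclusion $M(R) \cong \mathbb{Z}^+$ has no support. The conjugacy results of \cite{Davidson-Katsoulis:2006} do not supply it: there the semigroup $\mathbb{Z}^+$ is fixed on both sides of the isomorphism and only the dynamical system (over a commutative base $C_0(X)$) is recovered up to conjugacy; here the diagonal $C^*(R,+) \rtimes_{\alpha} U(R)$ need not even be commutative a priori, and the monoid $M(R)$ is not $\mathbb{Z}^+$. Your argument therefore rests on a semigroup-rigidity theorem that neither the paper nor the cited literature provides.

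The paper circumvents precisely this obstacle. It argues by contradiction: assume $A$ is commutative, which via the diagonal identification forces $U(R) = \{1\}$, hence $A = C^*(R,+) = C_0(\widehat{R,+})$ and, by the previous proposition, $M(R)$ is infinitely generated. It then picks two distinct irreducibles $x_1 \neq x_2$ of $M(R)$ and writes down explicit two-dimensional nest representations $\pi_i$ of $A(R)$, sending $f \in C_0(\widehat{R,+})$ to $\begin{bmatrix} f(\widehat{x_i}) & 0 \\ 0 & f(\widehat{1}) \end{bmatrix}$, $S_{x_i}$ to the nilpotent matrix unit, and $S_r$ to $0$ for $r \neq x_i$. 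What it extracts from \cite{Davidson-Katsoulis:2006} is not an abstract invariance of the semigroup but the concrete \emph{description of the two-dimensional nest representations} of an algebra of the form $A \rtimes \mathbb{Z}^+$; that description cannot accommodate two independent irreducibles, forcing $x_1 = x_2$ and contradicting infinite generation. In short: instead of recovering $\mathbb{Z}^+$ from the isomorphism, the paper compares a computable representation-theoretic invariant of the two sides. To repair your proof you would replace your first step with this nest-representation count (or some equivalent invariant of $A \rtimes \mathbb{Z}^+$); as written, the step ``$M(R) \cong \mathbb{Z}^+$'' is a restatement of the problem, not a proof.
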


\begin{proof}  We will assume that $A$ is commutative and hence
$U(R)$ is trivial.  In particular $A = C^*(R,+) =
C_0(\widehat{R,+})$. Now let $x_1$ and $x_2$ be two irreducible
elements of $M(R)$, then define two two-dimensional nest
representations of $A(R)$ by \begin{align*} \pi_i(f) & =
\begin{bmatrix} f(\widehat{x_i}) & 0 \\ 0 & f(\widehat{1})
\end{bmatrix}, \mbox{ for } f \in C_0(\widehat{R,+}) \\
\pi_i(S_{x_i}) & = \begin{bmatrix} 0 & 1 \\ 0 & 0 \end{bmatrix} \\
\pi_i(S_r) & = 0 \mbox{ for } r \neq x_i. \end{align*}

It follows from the description of the two-dimensional nest
representations of $A \times \mathbb{Z}^+$, see
\cite{Davidson-Katsoulis:2006}, that $x_1 = x_2$ which contradicts
the fact that $M(R)$ must be infinitely generated.
\end{proof}

It follows that if $R$ is a unique factorization domain $A(R)$ is
never a semicrossed product in the sense of \cite{Peters:1984} and
hence this collection of algebras presents a unique type of
semicrossed product.

\section{Unitary representations of $R$}

Finally we wish to analyze the unitary representations of $A(R)$.

\begin{lem}\label{inclusion} There is a canonical completely contractive
representation $i: A(R) \rightarrow C^*(Q(R))$. \end{lem}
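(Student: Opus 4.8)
The plan is to exhibit $i$ as the map induced by the canonical inclusion of $R$ into its field of quotients. The inclusion $\iota: R \hookrightarrow Q(R)$, $r \mapsto r/1$, is a unital ring monomorphism, and since $Q(R)$ is a field the algebra $A(Q(R))$ is a $C^*$-algebra, as noted above; this $C^*$-algebra is what we denote $C^*(Q(R))$. Thus the lemma is essentially the functoriality result applied to $\iota$, and the work is to make the induced map explicit and to record that it is completely contractive.

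First I would concretely describe the target generators. Write $\{ \widetilde{u}^{\,x} : x \in Q(R) \}$ and $\{ \widetilde{s}_x : x \in Q(R)^{\times} \}$ for the universal unitaries and isometries generating $C^*(Q(R)) = A(Q(R))$; since $Q(R)$ is a field each $\widetilde{s}_x$ is in fact a unitary. Restricting the index sets along $\iota$, consider the families $\{ \widetilde{u}^{\,n} : n \in R \}$ and $\{ \widetilde{s}_r : r \in R^{\times} \}$ inside $C^*(Q(R))$. I would then verify that these satisfy the three defining relations of an isometric representation of $R$: relation (1), $\widetilde{s}_r \widetilde{s}_t = \widetilde{s}_{rt}$, and relation (2), $\widetilde{u}^{\,n} \widetilde{u}^{\,m} = \widetilde{u}^{\,m+n}$, are the corresponding relations in $A(Q(R))$ since $r,t \in R^{\times} \subseteq Q(R)^{\times}$ and $m,n \in R \subseteq Q(R)$; relation (3), $\widetilde{u}^{\,n}\widetilde{s}_r = \widetilde{s}_r \widetilde{u}^{\,rn}$, holds because $rn \in R$, so $\widetilde{u}^{\,rn}$ is again one of the generators indexed by $R$. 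That each $\widetilde{s}_r$ is actually a unitary is harmless, since unitaries are isometries.

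Having produced an isometric representation of $R$ inside $C^*(Q(R))$, the universal property of $A(R)$ supplies a completely contractive homomorphism $i: A(R) \rightarrow C^*(Q(R))$ determined by $u^n \mapsto \widetilde{u}^{\,n}$ and $s_r \mapsto \widetilde{s}_r$. Complete contractivity is automatic from universality, since every isometric representation of $R$ yields a completely contractive representation of $A(R)$ and the families above constitute such a representation. I expect no serious obstacle beyond the bookkeeping of index sets under $\iota$: one checks that the relations pulled back from $Q(R)$ are exactly the defining relations of $A(R)$ and that $R^{\times}$ lands in $Q(R)^{\times}$, which holds because $\iota$ is injective and unital and hence carries nonzero elements to nonzero elements. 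Canonicity of $i$ is inherited from the canonicity of $\iota$.
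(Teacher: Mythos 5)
Your proposal is correct and follows exactly the paper's argument: the paper's one-line proof likewise uses the inclusion $R \subseteq Q(R)$ to produce an isometric representation of $R$ inside $C^*(Q(R))$ and then invokes the universal property of $A(R)$ for complete contractivity. Your version simply spells out the routine verification of the three relations (including that $rn \in R$ and that $R^{\times}$ lands in $Q(R)^{\times}$) that the paper leaves implicit.
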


\begin{proof} As $R \subseteq Q(R)$ the inclusion map
provides an isometric representation of $R$ inside $C^*(Q(R))$ and
hence the induced map on $A(R)$ is completely contractive.
\end{proof}

\begin{prop}\label{extension} Let $\pi: A(R) \rightarrow B(H)$ be a unitary
representation.  There exists a $*$-representation $ \tau_{\pi}:
C^*(Q(R)) \rightarrow C^*(\pi(A(R))$ which is onto and satisfies $
\tau_{\pi} \circ i (x) = \pi(x)$ for all $x \in A(R)$. \end{prop}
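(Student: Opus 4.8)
The plan is to manufacture, out of the given unitary representation $\pi$ of $A(R)$, a genuine unitary representation of the field $Q(R)$ on the same Hilbert space $H$, and then to feed that representation into the universal property already established for fields. Write $U^n := \pi(u^n)$ for $n \in R$ and $S_r := \pi(s_r)$ for $r \in R^\times$; because $\pi$ is a \emph{unitary} representation, each $S_r$ is a unitary, and this is precisely what allows us to invert the denominators appearing in $Q(R)$. Every element of $Q(R)$ has the form $n/s$ with $n \in R$, $s \in R^\times$, and every element of $Q(R)^\times$ has the form $r/s$ with $r,s \in R^\times$. Accordingly I would define
\[ \widetilde{U}^{\,n/s} := S_s\, U^n\, S_s^{*}, \qquad \widetilde{S}_{r/s} := S_r\, S_s^{*}. \]
Since $S_1 = u^0 = 1$, these reduce to $U^n$ and $S_r$ when $s = 1$, so the new families genuinely extend the old ones.

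The first thing to verify is that these formulas depend only on the fractions and not on the chosen representatives. Suppose $n/s = n'/s'$, that is $ns' = n's$ in $R$. Conjugating and using the covariance relation $S_t^{*} U^m S_t = U^{tm}$ (which is relation (3) of the definition) together with the commutativity of the family $\{S_r\}$ (a consequence of $S_rS_t = S_{rt}$), one reduces both $S_s U^n S_s^{*}$ and $S_{s'} U^{n'} S_{s'}^{*}$ to the single operator $\pi(u^{sn'/s'})$; the cancellation $sn' = s'n$ available in the integral domain is exactly what forces $sn'/s' = n$, so the two expressions coincide. The same cross-multiplication argument, now purely multiplicative, handles $\widetilde{S}_{r/s}$. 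I would then check the three defining relations of a unitary representation of $Q(R)$: for the additive relation I put $p,q$ over a common denominator $s$ and apply $U^mU^n = U^{m+n}$; for the product relation I use $S_rS_{r'} = S_{rr'}$ after commuting the middle factors; and the covariance relation $\widetilde{U}^{p}\widetilde{S}_a = \widetilde{S}_a \widetilde{U}^{ap}$ is obtained by writing $p = n/t$, $a = r/s$ and pushing the $S$'s past the $U$'s via relation (3). I expect this last covariance identity to be the main obstacle, since it is the only point at which the additive and multiplicative extensions interact, and one must track denominators on both sides simultaneously.

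Because $Q(R)$ is a field, $A(Q(R)) = C^*(Q(R))$ is the universal $C^*$-algebra for unitary representations of $Q(R)$, so the representation $\{\widetilde{U}^q, \widetilde{S}_a\}$ induces a $*$-representation $\tau_\pi : C^*(Q(R)) \to B(H)$. Each generator $\widetilde{U}^q$ and $\widetilde{S}_a$ is a word in the operators $U^n$, $S_r$ and their adjoints, so the image of $\tau_\pi$ lies inside $C^*(\pi(A(R)))$; conversely that image contains $\widetilde{U}^n = \pi(u^n)$ and $\widetilde{S}_r = \pi(s_r)$, which already generate $C^*(\pi(A(R)))$, so $\tau_\pi$ is onto. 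Finally, $\tau_\pi \circ i$ and $\pi$ are homomorphisms on $A(R)$ that agree on the generators: the map $i$ sends $u^n$ and $s_r$ to the corresponding elements of $C^*(Q(R))$, which $\tau_\pi$ returns to $\pi(u^n)$ and $\pi(s_r)$. Since they agree on a generating set they agree on all of $A(R)$, giving $\tau_\pi \circ i(x) = \pi(x)$ and completing the proof.
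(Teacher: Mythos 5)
Your proposal is correct and follows essentially the same route as the paper: you define $\widetilde{S}_{r/s} = S_r S_s^{*}$ and $\widetilde{U}^{n/s} = S_s U^n S_s^{*}$, verify the relations for $Q(R)$ via commutativity of the $S_r$'s and the covariance identity $U^nS_r = S_rU^{rn}$, and then invoke the universality of $C^*(Q(R))$ for unitary representations to obtain $\tau_\pi$ --- exactly the paper's construction of $\tilde{\pi}$ and $\tau_\pi$ (with $T_pT_q^{*}$ and $T_qV^pT_q^{*}$ in its notation). The only differences are cosmetic: you additionally make explicit the well-definedness on fractions and the surjectivity of $\tau_\pi$, both of which the paper leaves implicit.
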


\begin{proof} Since $\pi$ is a unitary representation we know that $
\pi (s_r) = T_r$ is a unitary for all $r \in R^{\times}$.  For all
$\left[ \frac{p}{q} \right] \in Q(R)^{\times}$ we define
$\tilde{\pi} \left(s_{[\frac{p}{q}]}\right) = T_p T_q^*$.  We also
define $ \tilde{\pi} \left( u^{[\frac{p}{q}]} \right) = T_q V^p
T_q^*$.  We need only show that the unitaries $T_pT_q^*$ and $T_q
V^p T_q^*$ satisfy the relations for $Q(R)$ and hence the induced
representation $\tau_{\pi}$ will be the required $*$-representation.

Notice first that $ T_p T_q = T_q T_p$ and $T_p^*T_q^* = T_q^*T_p^*$
since $\pi$ is a unitary representation of $A(R)$.  It then follows
that \begin{align*}T_p T_q^* &= T_q^*T_qT_pT_q^* \\ &=
T_q^*T_pT_qT_q^* \\ &= T_q^*T_p \end{align*} for all $ q, p \in
R^{\times}$. It follows that $\tilde{\pi} \left(s_{[ \frac{p_1}{q_1}
]}\right) \tilde{\pi} \left(s_{[ \frac{p_2}{q_2} ]}\right) =
\tilde{\pi} \left(s_{[ \frac{p_1p_2}{q_1q_2} ]}\right)$ for all $ [
\frac{p_1}{q_1} ]$ and $ [ \frac{p_2}{q_2}]$ in $Q(R)^{\times}$.

Next notice that $V^p T_q = T_q V^{pq}$ and $ T_q^* V^p =
V^{pq}T_q^*$ and hence \begin{align*} T_{q_1}V^{p_1}T_{q_1}^*
T_{q_2}V^{p_2}T_{q_2}^* &=
T_{q_1}T_{q_2}V^{p_1q_2}V^{q_1p_2}T_{q_1}^*T_{q_2}^* \\ & =
T_{q_1q_2} V^{p_1q_2 + q_1p_2}T_{q_1q_2}.\end{align*}  In other
words $\tilde{\pi} \left(u^{[\frac{p_1}{q_1}]} \right) \tilde{\pi}
\left( u^{[\frac{p_2}{q_2}]} \right) = \tilde{\pi} \left( u^{[
\frac{p_1}{q_1}] + [\frac{p_2}{q_2}]} \right) $.

Next we have that \begin{align*}\tilde{\pi} \left(
u^{[\frac{p_1}{q_1}]}\right) \tilde{\pi} \left(s_{[\frac{p_2}{q_2}]}
\right) &= T_{q_1}V^{p_1}T_{q_1}^* T_{p_2}T_{q_2}^* \\& =
T_{p_2}T_{q_2}^* T_{q_1} T_{q_2} V^{p_1p_2} T_{q_2}^* T_{q_1}^* \\&
= T_{p_2}T_{q_2}^* T_{q_1q_2} V^{p_1p_2} T_{q_1q_2}^* \\ &=
\tilde{\pi} \left( s_{[ \frac{p_2}{q_2} ]} \right) \tilde{\pi}
\left( u^{ [ \frac{p_1}{q_1} ][\frac{p_2}{q_2}]} \right).
\end{align*} Hence the $C^*$-algebra generated by the $T_p$ and $V^n$
satisfies the relations for $Q(R)$.  We call the induced
representation $\tau_{\pi}$.

Finally we note that $ \tau_{\pi} \circ i(s_p) = \tau_{\pi}
\left(s_{[\frac{p}{1}]}\right) = T_pT_1^* = T_p$ for all $ p \in
Q^{\times}$ and $ \tau_{\pi} \circ i(u^n) = \tau_{\pi}
\left(u^{[\frac{n}{1}]}\right) = T_1V_nT_1^* = V_n$ for all $ n \in
R$ and hence $ \tau_{\pi} \circ i (x) = \pi(x)$ for all $x \in
A(R)$.\end{proof}

It would follow that if every isometric representation of $R$
dilated to a unitary representation (as for example the regular
representation does), then we could identify the $C^*$-envelope of
$A(R)$ as a crossed product, since the canonical representation $i$
would be completely isometric.  We do not think this is likely since
this does not even work in the case of $C(X) \rtimes_{\alpha}
\mathbb{Z}^+$ where $\alpha$ is a non-surjective continuous mapping,
see \cite{Davidson-Katsoulis:2007}.

%\begin{example}  $A(\mathbb{F}_k [ \! [ x ]\! ])$ and
%$A(\mathbb{Z}_{(p)})$

%The first integral domain $\mathbb{F}_k [ \! [ x ]\! ]$ is the ring
%of formal power series in one variable of the finite field $k$.  The
%second is the localization of $\mathbb{Z}$ with respect to the
%principal ideal generated by a prime integer $p$. In both of the
%above examples the monoid $M(R)$ is isomorphic to $\mathbb{Z}_+$ and
%hence every isometric representation lifts to a unitary
%representation and hence the $C^*$-envelopes are $A(Q(\mathbb{F}_k [
%\! [ x ]\! ]))$ and $ A( Q(\mathbb{Z}_{(p)})$, respectively.  The
%first is given by taking the regular representation of the field of
%Laurent series in one variable over $\mathbb{F}_k$.  The second is
%given by taking the regular representation of the field
%$\mathbb{Q}$. In either case this shows that the $C^*$-envelope of
%the semicrossed product is a crossed product.\end{example}

%We do not have a proof that a general isometric representation of an
%integral domain $R$ lifts to a unitary representation of $R$ but we
%point the reader to \cite[Theorem 5.4]{Paulsen:2002} which suggests
%that such a result may be possible.

\bibliographystyle{plain}

\begin{thebibliography}{00}

\bibitem{Arveson-Josephson}
W.\ Arveson and K.\ Josephson. Operator algebras and measure
preserving automorphisms.\ II {\em J.\ Functional Analysis} {\bf 4}
(1969) 100--134.

\bibitem{Blackadar:2006}
B.\ Blackadar, Operator algebras. Theory of $C^*$-algebras and von
Neumann algebras. Encyclopaedia of Mathematical Sciences, 122.\
Operator Algebras and Non-commutative Geometry, III.\ {\em
Springer-Verlag, Berlin}, 2006.

\bibitem{Cuntz}
J.\ Cuntz, $C^*$-algebras associated with the $ax+b$-semigroup over
$\Bbb N$ {\em $K$-theory and noncommutative geometry}, 201--215, EMS
Ser.\ Congr.\ Rep.\, {\em Eur.\ Math.\ Soc.\, Zürich}, 2008.

\bibitem{Cuntz-Li:2008}
J.\ Cuntz and X.\ Li, The regular $C^*$-algebra of an integral
domain, preprint, 2008.

\bibitem{Davidson-Katsoulis:2006}
K.\ Davidson and E.\ Katsoulis, Isomorphisms between topological
conjugacy algebras, {\em J.\ reine angew.\ Math.\ }{\bf 621},
(2008), 29--51.

\bibitem{Davidson-Katsoulis:2007}
K.\ Davidson and E.\ Katsoulis, Operator algebras for multivariable
dynamics, to appear in {\em Mem.\ Amer.\ Math.\ Soc.\ }, 2007.

\bibitem{Duncan-Peters}
B.\ Duncan and J.\ Peters, Dynamics and semicrossed products by
discrete semigroups, preprint, 2010.

\bibitem{Li:2009} X.\ Li, Ring $C^*$-algebras, preprint, 2009.

\bibitem{Paulsen:2002}
V.\ Paulsen, {\em Completely bounded maps and operator algebras}
{\em Cambridge University Press, Cambridge}, 2002.

\bibitem{Peters:1984}
J.\ Peters, Semicrossed products of $C^*$-algebras, {\em J.\ Funct.\
Anal.\ }{\bf 59} (1984) 498--534.

\bibitem{Williams:2007}
D.\ Williams, {\em Crossed products of $C^*$-algebras} American
Math.\ Soc.\, Providence, 2007.


\end{thebibliography}

\end{document}